\documentclass[11pt]{article}
\usepackage[utf8]{inputenc}
\usepackage{geometry}
\usepackage{amsmath, amssymb, amsthm}
\usepackage{graphicx}
\usepackage{bm}
\usepackage{ascmac}
\usepackage{fullpage}

\newtheorem{lemma}{Lemma}

\usepackage{url}
\usepackage{mathtools}
\usepackage{booktabs}
\usepackage[linesnumbered,ruled]{algorithm2e}
\usepackage{multirow}
\title{Structure-Aware Optimization of Decision Diagrams for Health Guidance via Integer Programming}

\author{Nanako Shimaoka$^{1}$ 
\and
Naoyuki Kamiyama$^{2}$ 
\and
Shinji Hotta$^{1}$ 
\and
Sayuri Kohmura$^{1}$
\and
Yuta Kurume$^{1}$
\and
Hiroko Suzuki$^{1}$
\and
Akihiro Inomata$^{1}$ 
\and
Eigo Segawa$^{1}$ 
}

\date{$^1$Fujitsu Limited\\
\url{{shimaoka.nanako, hotta_s, kohmura.sayuri}@fujitsu.com}\\
\url{{kurume.yuta, hiroko.suzuki, akiino, segawa.eigo}@fujitsu.com}\vspace{1mm}\\
$^2$Institute of Mathematics for Industry, Kyushu University
\\
\url{kamiyama@imi.kyushu-u.ac.jp}
}

\begin{document}

\maketitle

\begin{abstract}
In this paper, we consider 
a structure-aware optimization problem for
decision diagrams used for health guidance.
In particular, we focus on decision diagrams
that decide to whom public sectors suggest
consulting a medical worker. 
Furthermore, these diagrams decide which 
notification method should be used
for each target person. 
In this paper, we formulate 
this problem as an integer program.
Then we evaluate its practical usefulness 
through numerical examples.
\end{abstract}

\section{Introduction}

One of the main purposes 
of Japanese public health care 
is to prevent illnesses from becoming severe and to save on future medical costs.
To this end, public sectors often suggest to a part of citizens that they consult 
a medical worker via health guidance based on the results of health checks.
When they decide to whom they suggest
consulting a medical worker, 
decision-making rules
made based on 
specialized knowledge about public health care
are commonly used.
In this situation, 
decision diagrams 
are frequently used 
because decision making by using decision diagrams is 
highly interpretable and transparent. 

The main topic of this paper is 
the structure-aware optimization of
decision diagrams used for health guidance.
More concretely, we consider 
the problem of 
finding a decision diagram 
minimizing a key performance indicator under a cost condition. 
Particularly, we focus on decision diagrams
decide to whom public sectors suggest
consulting a medical worker and 
how to notify the result for each target person. 
We formulate 
this problem as an integer program, and 
we evaluate the practical usefulness of our 
approach through numerical examples motivated 
by applications to health guidance 
in Japan.  

In this paper, 
we consider the following optimization problem. 
We are given a finite set $I$ of health checkup items, 
a finite set $M$ of notification methods (including ``no suggestion''), and 
a directed acyclic graph $D$ with a single source 
such that each non-sink vertex of $D$ is assigned a subset of health checkup
items
$I$ 
and each sink of $D$ is assigned a notification method in $M$.
In applications, 
the input graph $D$ represents the current decision rule. 
In addition, 
we are given 
a finite set of vectors in $\{0,1\}^I$, which 
represents the set of examinee types. 
The goal is to find a decision diagram 
minimizing a key performance indicator under a cost condition.

Our problem seems to be similar to 
the problem of \emph{optimizing decision trees} (see, e.g., \cite{BertsimasD17}).
In this problem, 
we are given a set of data with classification labels.
Then the goal is 
to find a small decision tree which is consistent with
the data or minimizes the misclassification. 
In the following points, 
our problem is different from the problem of 
optimizing decision trees.
\begin{itemize}
\item
In the problem of optimizing decision trees, 
classification labels are given to 
samples in the input data, and the goal is 
to minimize the misclassification. 
On the other hand, in our problem, 
no classification label is given to 
the input data, and the goal is 
to control the subset of samples assigned 
to each sink. 
\item
In the problem of optimizing decision trees,
we construct an optimal decision tree from scratch. 
On the other hand, we care about the 
difference between the resulting decision diagram 
and the initial one.
\item
Basically, in the problem of optimizing decision trees, 
there does not exist a constraint on the resulting 
decision tree. 
On the other hand, in our model, 
we could consider constraints on the resulting decision diagram
arising from practical applications. 
\end{itemize} 

In \cite{SuzukiKNKSS25}, 
a problem similar to ours was considered, focusing on threshold optimization within a fixed decision diagram structure.
In this problem, 
we are given a decision diagram such that the constraint for each 
internal vertex is represented by an inequality with 
a threshold value.
Then the goal is to optimize the threshold 
value for each non-sink vertex in such a 
way that an objective function is optimized. 
The problem considered in 
\cite{SuzukiKNKSS25} and our problem 
are different 
in the following points.
\begin{itemize}
\item
We consider the difference between the resulting decision diagram 
and the input decision diagram. 
On the other hand, in \cite{SuzukiKNKSS25}, this kind of objective value 
is not considered. 
\item
We optimize the combination of conditions assigned to each non-sink vertex, which can
include an empty set, thereby enabling effective structural modification of the decision diagram.
On the other hand, in 
\cite{SuzukiKNKSS25}, the condition assigned to 
each non-sink vertex is basically fixed. 
\end{itemize}

{\bf Related work.} 
As said above, the most relevant problem to our problem is 
the problem of optimizing decision trees. 
It is known that 
a typical setting of this problem is 
NP-complete~\cite{HyafilR76}. 
Practical algorithms for finding 
a good decision tree have been proposed 
(e.g., CART~\cite{Breiman17}). 
This approach has been applied for the 
problem of determining optimal treatments in public health 
(see, e.g., \cite{AmramDZ22,BertsimasDM19,BertsimasKMN22}). 

In \emph{decision tree learning}, 
the aim is to construct a decision tree 
that can output the same result as an unknown true 
decision tree by making queries to some 
oracles (e.g., an equivalence oracle and 
a membership oracle).   
Several models have been proposed.
For example, 
the Probably Approximately Correct (PAC) learning
model was 
introduced by Valiant~\cite{Valiant84}, and 
the model of exact learning from equivalence queries
was introduced by Angluin~\cite{Angluin87} and Littlestone~\cite{Littlestone87}.
Our problem and decision tree learning are different in the point that 
samples are given as 
a part of the input data in our problem. 

{\bf Organization.}
The rest of this paper is organized as follows. 
In Section~\ref{section:problem_formulation}, we formally 
define our problem. 
In Section~\ref{section:integer_program}, we formulate 
our problem as an integer program.
In Section~\ref{section:numerical_example}, 
we show results of numerical examples. 
Section~\ref{section:conclusion} 
concludes this paper. 

\section{Problem Formulation}
\label{section:problem_formulation}

In this section, we formally define 
a structure-aware optimization problem for decision
diagrams.

In our problem, we are given a finite set $I$ of \emph{health checkup items} 
(e.g., $\mbox{HbA1c} \ge 6.5$ and $\mbox{eGFR} < 30$) 
and 
a finite set $M$ of
\emph{notification methods} 
(e.g., mail, e-mail, and telephone).
Let $T$ be a finite set of \emph{examinee types}.
For each examinee type $t \in T$, 
we are given a positive integer $\omega_t$, which 
indicates 
the number of examinees having 
the examinee type $t$. 
For each examinee type $t \in T$, 
we are given a vector $X_t \in \{0,1\}^I$. 
For each health checkup item $i \in I$, 
$X_t(i) = 1$ (resp.\ $X_t(i) = 0$)
means that 
the examinee type $t$ is positive 
(resp.\ negative) 
for $i$. 
For each examinee type $t \in T$, 
we are given a vector $Y_t \in \{0,1\}^M$. 
For each notification method $m \in M$, 
$Y_t(m) = 1$ (resp.\ $Y_t(m) = 0$)
means that the examinee type 
$t$ reacts positively 
(resp.\ negatively) 
for $m$. 
Furthermore, 
we are given a vector $Z \in \{0,1\}^T$. 
For each examinee type $t \in T$, 
$Z(t) = 1$ means that 
if the examinee type $t$ reacts positively for the assigned 
notification method, then 
the result on some pre-specified health checkup 
item (e.g., HbA1c) will be improved. 

In this paper, we are given 
a directed acyclic graph $D = (V,A)$
that contains a single source $r$.\footnote{%
A directed graph is said to be 
\emph{acyclic} if 
it does not contain a directed cycle.
A vertex of a directed acyclic graph 
is called a \emph{source} 
(resp.\ \emph{sink}) 
if no arc enters (resp.\ leaves) 
this vertex.}
In our problem, $D$ represents the structure of 
a decision diagram.
Let $S$ be the set of sinks of $D$. 
Define $U \coloneqq V \setminus S$.  
Furthermore, we assume that, 
for every vertex $u \in U$, 
the following conditions are satisfied. 
(i) 
Exactly two arcs leave $u$. 
(ii)
Exactly one arc leaving $u$ has 
the label $0$, and 
the other arc has the label $1$.  
For each vertex $v \in V \setminus \{r\}$ and 
each integer $\ell \in \{0,1\}$, 
we define $\Gamma_{\ell}(v)$ as the set 
of vertices $u \in U$ such that there exists 
an arc in $A$ from $u$ to $v$ whose label is $\ell$. 

Our goal is to optimize an objective function 
(i.e., a key performance indicator) 
by assigning a subset of health checkup items in $I$
to each vertex in $U$ and a notification method in $M$ to 
each sink in $S$. 
A mapping $\phi \colon V \to 2^{I \cup M}$ 
is called an \emph{assignment} on $D$ if 
(i) $\phi(u) \subseteq I$ for every 
vertex $u \in U$, and 
(ii) $\phi(s) \subseteq M$ and 
$|\phi(s)| = 1$ for every sink $s \in S$.
For each assignment $\phi$ on $D$ and 
each sink $s \in S$, 
we do not distinguish between 
$\phi(s)$ and the unique element in $\phi(s)$. 
For each vertex $u \in U$, we are given 
a subset $\mathbb{C}_u \subseteq 2^I$, 
which represents the set of 
health checkup 
items that can be assigned to $u$. 
In applications of our model, a policy maker can control 
health checkup items assigned to the vertex $u$ by appropriately 
setting $\mathbb{C}_u$. 
An assignment $\phi$ on $D$ is said to be 
\emph{feasible} if 
$\phi(u) \in \mathbb{C}_u$
for every vertex $u \in U$. 
Finally, we are given 
an \emph{initial assignment} $\phi_{\rm in}$ on $D$, which
represents the current decision diagram. 

For each examinee type $t \in T$ and 
each subset $c \subseteq I$, 
we define $c(t)$ by 
\begin{equation*}
c(t) \coloneqq 
\begin{cases}
1 & \mbox{$X_t(i) = 1$ for some health checkup item $i \in c$}\\
0 & \mbox{otherwise}.
\end{cases}
\end{equation*}
Furthermore, for each vertex $u \in U$, 
each examinee type $t \in T$, and 
each integer $\ell \in \{0,1\}$, 
we define $\mathbb{C}_u(t,\ell)$ as 
the set of elements $c \in \mathbb{C}_u$ 
such that $c(t) = \ell$.

Assume that we are given an assignment 
$\phi$ and an examinee type $t \in T$. 
Define $\phi(t)$, which is the notification method 
reached when we apply $\phi$ for $t$, as follows.
First, we compute $(\phi(r))(t)$, and 
then we move to the vertex $v$ such that the edge 
between $r$ and $v$ has the label $(\phi(r))(t)$. 
If $v \in S$, 
then we define $\phi(t) \coloneqq \phi(v)$. 
Otherwise, we repeat the above operation for 
$v$.

For each notification method $m \in M$, 
we are given a non-negative integer ${\sf cost}(m)$, 
which is the cost of the notification method $m$
for one examinee. 
Furthermore, we are given a positive 
integer $B$, 
which represent the upper bound of the total cost.
We define the \emph{cost} of a feasible assignment $\phi$
by 
\begin{equation*} 
{\sf cost}(\phi) \coloneqq \sum_{m \in M}
{\sf cost}(m)
\sum_{t \in T \colon \phi(t) = m}
\omega_t.
\end{equation*}

From here, we consider an objective function. 
Let $\phi$ be a feasible assignment.
Define 
\begin{equation*}
{\sf obj}_1(\phi) \coloneqq 
|\{v \in V \mid \phi(v) = \phi_{\rm in}(v)\}|.
\end{equation*}
That is, ${\sf obj}_1(\phi)$ indicates
the similarity between the input assignment and 
the resulting assignment. 
Furthermore, we  define 
\begin{equation*}
{\sf obj}_2(\phi) \coloneqq 
\sum_{t \in T \colon Y_t(\phi(t))=1}
\omega_t.
\end{equation*}
That is, 
${\sf obj}_2(\phi)$ indicates 
the number of 
examinees that react positively 
for an assigned notification method in $\phi$.
Finally, we define 
\begin{equation*}
{\sf obj}_3(\phi) \coloneqq 
\sum_{t \in T \colon Y_t(\phi(t))=1, Z(t) = 1} \omega_t.
\end{equation*}
That is, 
${\sf obj}_3(\phi)$ indicates 
the number of examinees such that they react 
positively 
for assigned notification methods in $\phi$
and their results on the specified health checkup 
item will be improved. 
We are given positive integers 
$\Theta_1$, 
$\Theta_2$, and 
$\Theta_3$, which 
represent target values 
for 
${\sf obj}_1(\phi)$, 
${\sf obj}_2(\phi)$, and 
${\sf obj}_3(\phi)$, 
respectively. 
In our numerical examples, we set $\Theta_1 \coloneqq |V|$. 
Then we consider the following 
settings. 
\begin{description}
\item[Setting~1.]
The goal is to find an assignment $\phi$ on $D$ maximizing
$\sum_{i=1}^3
({\sf obj}_i(\phi) / \Theta_i)$
under the condition that ${\sf cost}(\phi) \le B$.
\item[Setting~2.]
The goal is to find an assignment $\phi$ on $D$ minimizing 
${\sf cost}(\phi)$
under the condition that 
${\sf obj}_1(\phi) \ge \Theta_1/2$ and 
${\sf obj}_i(\phi) \ge \Theta_i$
for every integer $i \in \{2,3\}$.
\item[Setting~3.]
The goal is to find an assignment $\phi$ on $D$ maximizing
${\sf obj}_1(\phi)$
under the condition that 
${\sf cost}(\phi) \le B$ and 
${\sf obj}_i(\phi) \ge \Theta_i$
for every integer $i \in \{2,3\}$.
\end{description} 
In our numerical examples, 
we investigate the differences among these three settings 
from the viewpoint of the 
computational time and the quality of the solution. 

\section{Integer Program Formulation}
\label{section:integer_program}

In this section, we formulate our problem as an integer program. 

The following variable $p$ (resp.\ $q$) indicates 
which subset of $I$ (resp.\ notification method in $M$) 
is assigned to each vertex in $U$ (resp.\ $S$). 
\begin{equation} \label{eq:p_q}
p_{u,c} \in \{0,1\} 
\ \ \ \mbox{($\forall u \in U$, $\forall c \in \mathbb{C}_u$)}, \ \ \ \ \ 
q_{s,m} \in \{0,1\} 
\ \ \ \mbox{($\forall s \in S$, $\forall m \in M$)}.
\end{equation}
We say that \emph{an assignment $\phi$ corresponds to 
$p,q$} if the following conditions are satisfied. 
\begin{equation*}
\mbox{
$p_{u,c} = 1$ if and only if 
$\phi(u) = c$, \ \ \ and \ \ \  
$q_{s,m} = 1$ if and only if 
$\phi(s) = m$}. 
\end{equation*}
We consider the following constraints.
\begin{equation} \label{eq:constraint_p_q}
\sum_{c \in \mathbb{C}_u}p_{u,c} = 1
\ \ \ \mbox{($\forall u \in U$)}, \ \ \ \ \ 
\sum_{m \in M}q_{s,m} = 1
\ \ \ \mbox{($\forall s \in S$)}. 
\end{equation}
Then 
it is easy to see that 
there exists a feasible assignment $\phi$ on $D$
corresponding to 
$p,q$ if and only if 
\eqref{eq:constraint_p_q} is satisfied. 

Next, we consider the following variables. 
\begin{equation} \label{eq:alpha_beta}
\alpha_{t,v} \in \{0,1\} 
\ \ \ \mbox{($\forall t \in T$, $\forall v \in V$)}, \ \ \ \ \ 
\beta_{t,u,\ell} \in \{0,1\} 
\ \ \ \mbox{($\forall t \in T$, $\forall u \in U$, $\forall \ell \in \{0,1\}$)}. 
\end{equation}
We aim to guarantee that 
(i) $\alpha_{t,v}=1$ if and only if 
we pass through $v$ when we apply $\phi$ for $t$, and 
(ii) 
$\beta_{t,u,\ell} = 1$ if and only if 
$\phi(u) \in \mathbb{C}_u(t,\ell)$ and 
we pass through $u$ when we apply $\phi$ for $t$.
To this end, we consider the following constraints. 
\begin{equation} \label{eq:constraint_alpha} 
\begin{split}
& \alpha_{t,r} = 1
\ \ \ \mbox{($\forall t \in T$)}.\\
& \alpha_{t,v} \le \sum_{\ell \in \{0,1\}}\sum_{w \in \Gamma_{\ell}(v)}
\beta_{t,w,\ell} 
\ \ \ \mbox{($\forall t \in T$, $\forall v \in V \setminus \{r\}$)}.\\
& \alpha_{t,v} \ge \beta_{t,w,\ell} 
\ \ \ \mbox{($\forall t \in T$, $\forall v \in V \setminus \{r\}$,
$\forall \ell \in \{0,1\}$,
$\forall w \in \Gamma_{\ell}(v)$)}.
\end{split} 
\end{equation}

\begin{lemma} \label{lemma_1:alpha_beta}
Assume that we are given a feasible assignment 
$\phi$ corresponding to $p,q$ satisfying \eqref{eq:constraint_p_q}, 
and assume that $\alpha,\beta$ satisfy 
\eqref{eq:constraint_alpha}.
Let $t,v$ be 
an examinee type in $T$ and
a vertex in $V \setminus \{r\}$, respectively. 
Assume that, for 
every integer 
$\ell \in \{0,1\}$ and
every vertex $w \in \Gamma_\ell(v)$, 
$\beta_{t,w,\ell} = 1$ if and only if 
$\phi(w) \in \mathbb{C}_w(t,\ell)$ and 
we pass through $w$ when we apply $\phi$ for $t$.
Then $\alpha_{t,v} = 1$ 
if and only if 
we pass through $v$ when we apply $\phi$ for $t$. 
\end{lemma}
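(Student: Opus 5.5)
The argument combines the two inequalities of \eqref{eq:constraint_alpha} indexed by $v$ with one structural fact about the walk defined by $\phi$. That fact is: for $v \in V \setminus \{r\}$, we pass through $v$ when we apply $\phi$ for $t$ if and only if there exist $\ell \in \{0,1\}$ and $w \in \Gamma_\ell(v)$ such that we pass through $w$ and $(\phi(w))(t) = \ell$.

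This holds because the walk starts at $r \neq v$, so reaching $v$ means the step into $v$ comes from some vertex $w \in U$. At $w$ the walk follows the arc whose label is $(\phi(w))(t)$, and $w \in \Gamma_\ell(v)$ exactly when the arc from $w$ to $v$ carries label $\ell$. The converse is immediate from the definition of the walk.

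Since $\phi$ is feasible, $\phi(w) \in \mathbb{C}_w$. Hence $(\phi(w))(t) = \ell$ is equivalent to $\phi(w) \in \mathbb{C}_w(t,\ell)$. Combined with the hypothesis of the lemma, the condition ``there exist $\ell$ and $w \in \Gamma_\ell(v)$ with $\beta_{t,w,\ell}=1$'' is therefore equivalent to passing through $v$.

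The two directions then follow.
\begin{itemize}
\item If we pass through $v$, pick such $\ell, w$. Then $\beta_{t,w,\ell}=1$, and the third constraint of \eqref{eq:constraint_alpha} gives $\alpha_{t,v} \ge 1$, so $\alpha_{t,v}=1$ since $\alpha$ is binary.
\item If we do not pass through $v$, then $\beta_{t,w,\ell}=0$ for every $\ell$ and every $w \in \Gamma_\ell(v)$. The second constraint gives $\alpha_{t,v} \le 0$, so $\alpha_{t,v} = 0$.
\end{itemize}

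The only step that needs care is the structural equivalence about the walk. There we must check two things: the walk visits each vertex at most once, which holds because $D$ is acyclic, and ``pass through $v$'' is meaningful as a membership in the vertex sequence of the walk. Parallel arcs (both arcs from $w$ pointing to $v$) cause no problem, since then $w \in \Gamma_0(v) \cap \Gamma_1(v)$ and exactly one of the two labels matches $(\phi(w))(t)$.
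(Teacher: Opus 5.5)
Your proof is correct and follows the paper's argument: the third constraint of \eqref{eq:constraint_alpha} gives ``pass through $v$ $\Rightarrow$ $\alpha_{t,v}=1$'', and the second constraint gives the other direction, which you phrase as a contrapositive (equivalent to the paper's direct version). You are more explicit than the paper about two facts it uses silently: that the walk enters $v$ from a predecessor $w \in \Gamma_\ell(v)$ with $(\phi(w))(t)=\ell$, and that feasibility lets you identify $(\phi(w))(t)=\ell$ with $\phi(w)\in\mathbb{C}_w(t,\ell)$.
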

\begin{proof}
Assume that 
$\alpha_{t,v} = 1$. 
The second constraint of \eqref{eq:constraint_alpha} implies that 
there exist an integer $\ell \in \{0,1\}$ and a vertex $w \in \Gamma_{\ell}(v)$
such that $\beta_{t,w,\ell} = 1$. 
This and the assumption of this lemma imply that 
$\phi(w) \in \mathbb{C}_w(t,\ell)$ and 
we pass through $w$ when we apply $\phi$ for $t$.
This implies that 
we pass through $v$ when we apply $\phi$ for $t$. 

Conversely, we assume that 
we pass through $v$ when we apply $\phi$ for $t$. 
Then 
there exist an integer $\ell \in \{0,1\}$ and a vertex $w \in \Gamma_{\ell}(v)$
such that 
$\phi(w) \in \mathbb{C}_w(t,\ell)$ and 
we pass through $w$ when we apply $\phi$ for $t$.
Thus, 
the assumption of this lemma implies that 
$\beta_{t,w,\ell} = 1$. 
This and the third constraint of \eqref{eq:constraint_alpha} implies that
$\alpha_{t,v}=1$. 
\end{proof} 

Furthermore, we consider the following constraints. 
\begin{equation} \label{eq:constraint_beta}
\begin{split}
& \beta_{t,u,\ell} \le \alpha_{t,u}
\ \ \ \mbox{($\forall t \in T$,
$\forall u \in U$,
$\forall \ell \in \{0,1\}$)}.\\
& \beta_{t,u,\ell} \le \sum_{c \in \mathbb{C}_u(t,\ell)}p_{u,c}
\ \ \ \mbox{($\forall t \in T$, $\forall u \in U$, $\forall \ell \in \{0,1\}$)}.\\
& \beta_{t,u,\ell} \ge \alpha_{t,u} + \sum_{c \in \mathbb{C}_u(t,\ell)}p_{u,c} -1
\ \ \ \mbox{($\forall t \in T$, $\forall u \in U$, $\forall \ell \in \{0,1\}$)}. 
\end{split} 
\end{equation}

\begin{lemma} \label{lemma_2:alpha_beta}
Assume that we are given a feasible assignment 
$\phi$ corresponding to $p,q$ satisfying \eqref{eq:constraint_p_q},
and assume 
that $\alpha,\beta$ satisfy 
\eqref{eq:constraint_beta}. 
Let $t,u$ be 
an examinee type in $T$ and
a vertex in $U$, respectively. 
Assume that 
$\alpha_{t,u} = 1$ 
if and only if 
we pass through $u$ when we apply $\phi$ for $t$.
Then
for every integer $\ell \in \{0,1\}$, 
$\beta_{t,u,\ell} = 1$ if and only if 
$\phi(u) \in \mathbb{C}_u(t,\ell)$ and 
we pass through $u$ when we apply $\phi$ for $t$.
\end{lemma}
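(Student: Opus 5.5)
The argument is a direct case analysis on the three constraints in \eqref{eq:constraint_beta}, using that they linearize a logical AND of two binary quantities. Fix $t$, $u$, and $\ell \in \{0,1\}$. Write $\sigma \coloneqq \sum_{c \in \mathbb{C}_u(t,\ell)} p_{u,c}$.

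The first step is to show that $\sigma \in \{0,1\}$, and that $\sigma = 1$ if and only if $\phi(u) \in \mathbb{C}_u(t,\ell)$. Since $p$ satisfies \eqref{eq:constraint_p_q} and $\phi$ corresponds to $p,q$, exactly one $c \in \mathbb{C}_u$ has $p_{u,c}=1$, namely $c = \phi(u)$ (here feasibility gives $\phi(u) \in \mathbb{C}_u$). The sets $\mathbb{C}_u(t,0)$ and $\mathbb{C}_u(t,1)$ are subsets of $\mathbb{C}_u$, so the sum picks up this unique $1$ exactly when $\phi(u) \in \mathbb{C}_u(t,\ell)$, and is $0$ otherwise.

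Next I use the hypothesis that $\alpha_{t,u} \in \{0,1\}$ equals $1$ exactly when we pass through $u$ when we apply $\phi$ for $t$. The target statement then says that $\beta_{t,u,\ell} = 1$ if and only if $\alpha_{t,u} = 1$ and $\sigma = 1$.

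For the direction ``if'', suppose both $\alpha_{t,u}=1$ and $\sigma=1$. The third constraint of \eqref{eq:constraint_beta} gives $\beta_{t,u,\ell} \ge 1+1-1 = 1$, so $\beta_{t,u,\ell}=1$ because it is binary by \eqref{eq:alpha_beta}.

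For the direction ``only if'', suppose $\beta_{t,u,\ell}=1$. The first constraint forces $\alpha_{t,u} \ge 1$, so we pass through $u$. The second constraint forces $\sigma \ge 1$, so $\phi(u) \in \mathbb{C}_u(t,\ell)$.

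There is no real obstacle here. The only point needing care is the first step: making explicit that the constraint $\sum_{c}p_{u,c}=1$ together with the correspondence makes $\sigma$ an exact $0/1$ indicator of $\phi(u) \in \mathbb{C}_u(t,\ell)$.
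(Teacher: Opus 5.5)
Your proof is correct and follows essentially the same route as the paper: the first two constraints of \eqref{eq:constraint_beta} give the forward direction, and the third gives the converse. Your explicit check that $\sum_{c \in \mathbb{C}_u(t,\ell)} p_{u,c}$ is a $0/1$ indicator of $\phi(u) \in \mathbb{C}_u(t,\ell)$ spells out a step the paper uses only implicitly.
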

\begin{proof}
Let $\ell$ be an integer in $\{0,1\}$.

Assume that 
$\beta_{t,u,\ell} = 1$. 
Then the first condition of \eqref{eq:constraint_beta} implies that 
$\alpha_{t,u} = 1$. 
This and the assumption of this lemma imply that 
we pass through $u$ when we apply $\phi$ for $t$. 
Furthermore, the second condition of \eqref{eq:constraint_beta} implies that 
there exists an element $c \in \mathbb{C}_u(t,\ell)$
such that $p_{u,c} = 1$. 
Thus, since $\phi$ corresponds to $p,q$, 
we have 
$\phi(u) \in \mathbb{C}_u(t,\ell)$. 

Conversely, we assume that 
$\phi(u) \in \mathbb{C}_u(t,\ell)$ and 
we pass through $u$ when we apply $\phi$ for $t$.
Then the assumption of this lemma implies that 
$\alpha_{t,u}=1$. 
In addition, since 
$\phi(u) \in \mathbb{C}_u(t,\ell)$ and 
$\phi$ corresponds to $p$, 
$\sum_{c \in \mathbb{C}_u(t,\ell)}p_{u,c}=1$. 
The third condition of \eqref{eq:constraint_beta} implies that 
$\beta_{t,u,\ell}=1$. 
\end{proof}

\begin{lemma} \label{lemma_3:alpha_beta}
Assume that we are given a feasible assignment 
$\phi$ corresponding to $p,q$ satisfying \eqref{eq:constraint_p_q},
and 
assume that $\alpha,\beta$ satisfy 
\eqref{eq:constraint_alpha}, 
\eqref{eq:constraint_beta}. 
Then for 
every examinee type $t \in T$ and
every vertex $v \in V$, 
$\alpha_{t,v} = 1$ 
if and only if 
we pass through $v$ when we apply $\phi$ for $t$. 
\end{lemma}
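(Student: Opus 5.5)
We argue by induction along a topological order of the acyclic graph $D$. Fix an examinee type $t \in T$ and a topological order $v_1 = r, v_2, \dots, v_{|V|}$ of $V$, so that every arc goes from an earlier vertex to a later one; such an order exists because $D$ is acyclic, and $r$ can be placed first because it is the unique source. The induction hypothesis at step $k$ has two parts. First, for every $j \le k$, $\alpha_{t,v_j} = 1$ if and only if we pass through $v_j$ when we apply $\phi$ for $t$. Second, for every $j \le k$ with $v_j \in U$ and every $\ell \in \{0,1\}$, $\beta_{t,v_j,\ell} = 1$ if and only if $\phi(v_j) \in \mathbb{C}_{v_j}(t,\ell)$ and we pass through $v_j$.

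For the base case $k=1$, the first constraint of \eqref{eq:constraint_alpha} gives $\alpha_{t,r}=1$. We always pass through $r$, so the statement about $\alpha$ holds. If $r \in U$, Lemma~\ref{lemma_2:alpha_beta} applied to $t$ and $r$ then yields the statement about $\beta_{t,r,\ell}$. If $r$ is a sink there is nothing to show for $\beta$.

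For the inductive step, let $v = v_{k+1} \neq r$. Every $w \in \Gamma_\ell(v)$, for either $\ell$, has an arc into $v$, so $w$ appears earlier in the order. By the second part of the induction hypothesis, the premise of Lemma~\ref{lemma_1:alpha_beta} therefore holds for $t$ and $v$. That lemma gives that $\alpha_{t,v}=1$ if and only if we pass through $v$. If $v \in U$, Lemma~\ref{lemma_2:alpha_beta} applied to $t$ and $v$ then gives the $\beta$ statement for $v$, which completes the induction. Once the induction reaches $k=|V|$, the claim holds for every $v \in V$, and since $t$ was arbitrary the lemma follows.

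The main point to check is that the two earlier lemmas can be chained without circularity. The $\alpha$ statement at $v$ needs the $\beta$ statements only at in-neighbours of $v$, and the $\beta$ statement at $v$ needs only the $\alpha$ statement at $v$ itself. Processing vertices in topological order respects exactly these dependencies. A second point is the implicit fact used in Lemma~\ref{lemma_1:alpha_beta}, namely that we pass through $v \neq r$ exactly when we pass through some in-neighbour $w$ and follow its $\ell$-arc with $(\phi(w))(t)=\ell$. This is immediate from the definition of applying $\phi$, because every vertex of $U$ has exactly one outgoing arc with each label.
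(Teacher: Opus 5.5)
Your proposal is correct and matches the paper's proof. Both argue by induction along an order compatible with the arcs: you use a topological order, while the paper inducts on the maximum number of arcs of an $r$--$v$ path. In both, Lemma~\ref{lemma_2:alpha_beta} turns the $\alpha$ statement at in-neighbours into the $\beta$ statement there, and Lemma~\ref{lemma_1:alpha_beta} turns that into the $\alpha$ statement at $v$. Carrying $\beta$ in your induction hypothesis, rather than re-deriving it at the in-neighbours in each step as the paper does, is only a bookkeeping difference.
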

\begin{proof}
Let $t,v$ 
be 
an examinee type in $T$ and
a vertex in $V$, respectively. 
We prove this lemma by induction on the maximum number of arcs of 
a path in $D$ from $r$ to $v$. 

First, we assume that $v = r$. 
The first constraint of 
\eqref{eq:constraint_alpha} implies that 
$\alpha_{t,v} = 1$ always hold. 
Furthermore, 
we clearly pass through $r$ when we apply $\phi$ for $t$. 

Next, we assume that 
$v \neq r$.
Assume that, 
for 
every integer 
$\ell \in \{0,1\}$ and 
every vertex $w \in \Gamma_{\ell}(v)$, 
$\alpha_{t,w} = 1$ if and only if 
we pass through $w$ when we apply $\phi$ for $t$.
Then 
Lemma~\ref{lemma_2:alpha_beta}
implies that, 
for  
every integer 
$\ell \in \{0,1\}$ and 
every vertex $w \in \Gamma_{\ell}(v)$, 
$\beta_{t,w,\ell} = 1$ if and only if 
$\phi(w) \in \mathbb{C}_w(t,\ell)$ and 
we pass through $w$ when we apply $\phi$ for $t$.
Thus, 
Lemma~\ref{lemma_1:alpha_beta} 
implies that 
$\alpha_{t,v} = 1$ 
if and only if 
we pass through $v$ when we apply $\phi$ for $t$.
This completes the proof. 
\end{proof} 

Third, we consider the following variables. 
\begin{equation} \label{eq:gamma}
\gamma_{t,s,m} \in \{0,1\} 
\ \ \ \mbox{($\forall t \in T$, $\forall s \in S$, $\forall m \in M$)}.
\end{equation}
We aim to guarantee that 
$\gamma_{t,s,m}=1$ if and only if
$m$ is assigned to $s$
and we reach $s$ 
when we apply $\phi$ for $t$. 
To this end, we consider the following conditions. 
\begin{equation} \label{eq:constraint_gamma}
\begin{split}
& 
\gamma_{t,s,m} \le q_{s,m}
\ \ \ \mbox{($\forall t \in T$, $\forall s \in S$, $\forall m \in M$)}.
\\ 
& 
\gamma_{t,s,m} \le \alpha_{t,s}
\ \ \ \mbox{($\forall t \in T$, $\forall s \in S$, $\forall m \in M$)}.
\\ 
& 
\gamma_{t,s,m}  \ge q_{s,m} + \alpha_{t,s} - 1
\ \ \ \mbox{($\forall t \in T$, $\forall s \in S$, $\forall m \in M$)}. 
\end{split}
\end{equation}

\begin{lemma} \label{lemma:gamma}
Assume that we are given a feasible assignment 
$\phi$ corresponding to $p,q$ satisfying \eqref{eq:constraint_p_q},
and assume that $\alpha,\beta,\gamma$ satisfy 
\eqref{eq:constraint_alpha}, 
\eqref{eq:constraint_beta},
\eqref{eq:constraint_gamma}.
Then for 
every examinee type $t \in T$,
every sink $s \in S$, and 
every notification method $m \in M$, 
$\gamma_{t,s,m}=1$ if and only if 
$m$ is assigned to $s$
and we reach $s$ 
when we apply $\phi$ for $t$. 
\end{lemma}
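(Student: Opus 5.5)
The plan is to reduce the statement to Lemma~\ref{lemma_3:alpha_beta} and then use the three constraints of \eqref{eq:constraint_gamma}, which linearize a logical AND of two binary quantities. Fix $t \in T$, $s \in S$, and $m \in M$. Since $s \in V$ and the hypotheses of Lemma~\ref{lemma_3:alpha_beta} hold (we have \eqref{eq:constraint_p_q}, \eqref{eq:constraint_alpha}, \eqref{eq:constraint_beta}), that lemma gives
\[
\alpha_{t,s} = 1 \iff \text{we pass through (reach) } s \text{ when we apply } \phi \text{ for } t .
\]
Because $\phi$ corresponds to $p,q$, we also have $q_{s,m} = 1$ if and only if $\phi(s) = m$, that is, $m$ is assigned to $s$.

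For the forward direction, assume $\gamma_{t,s,m} = 1$. The first constraint of \eqref{eq:constraint_gamma} together with binarity gives $q_{s,m} = 1$, so $m$ is assigned to $s$. The second constraint gives $\alpha_{t,s} = 1$, so we reach $s$ by Lemma~\ref{lemma_3:alpha_beta}.

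For the converse, assume $m$ is assigned to $s$ and we reach $s$. Then $q_{s,m} = 1$ and $\alpha_{t,s} = 1$, so the third constraint yields $\gamma_{t,s,m} \ge 1$, and hence $\gamma_{t,s,m} = 1$ because it is binary by \eqref{eq:gamma}.

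There is no real obstacle here. The only nontrivial ingredient, the correctness of $\alpha$ (which needs the induction over the DAG), is already supplied by Lemma~\ref{lemma_3:alpha_beta}. One small point to note is that ``reach $s$'' for a sink coincides with ``pass through $s$'', so that lemma applies verbatim.
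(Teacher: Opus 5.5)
Your proof is correct and follows the paper's argument essentially verbatim: the forward direction uses the first two constraints of \eqref{eq:constraint_gamma} together with Lemma~\ref{lemma_3:alpha_beta} and the correspondence between $\phi$ and $q$, and the converse uses the third constraint. Your remarks that $\gamma_{t,s,m}$ is binary and that ``reach $s$'' means the same as ``pass through $s$'' for a sink are small clarifications the paper leaves implicit.
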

\begin{proof}
Let $t$, $s$, and $m$ be 
an examinee type in $T$,
a sink in $S$, and 
a notification method in $M$, 
respectively. 

If $\gamma_{t,s,m}=1$, then the first and second constraints of \eqref{eq:constraint_gamma}
imply that 
$q_{s,m}=1$ and $\alpha_{t,s}=1$.
Thus, Lemma~\ref{lemma_3:alpha_beta} 
implies that 
$m$ is assigned to $s$
and 
we reach $s$ 
when we apply $\phi$ for $t$. 

Conversely, 
we assume that 
$m$ is assigned to $s$
and 
we reach $s$ 
when we apply $\phi$ for $t$. 
Then 
$q_{s,m}=1$ and $\alpha_{t,s}=1$.
Thus, 
the third constraint of \eqref{eq:constraint_gamma}
implies that 
$\gamma_{t,s,m}=1$. 
\end{proof} 

Finally, we consider the following variables. 
\begin{equation} \label{eq:z}
z_{t,m} \in \{0,1\} 
\ \ \ \mbox{($\forall t \in T$, $\forall m \in M$)}.
\end{equation}
Then we aim to 
guarantee that 
$z_{t,m} = 1$ if and only if  
$\phi(t) = m$.
To this end, we consider the following conditions. 
\begin{equation} \label{eq:constraint_z}
\begin{split}
& z_{t,m} \le \sum_{s \in S} \gamma_{t,s,m}
\ \ \ \mbox{($\forall t \in T$, $\forall m \in M$)} 
\\ 
& 
z_{t,m} \ge 
\gamma_{t,s,m} 
\ \ \ \mbox{($\forall t \in T$, $\forall s \in S$, $\forall m \in M$)}.
\end{split} 
\end{equation}

\begin{lemma}
Assume that we are given a feasible assignment 
$\phi$ corresponding to $p,q$ satisfying \eqref{eq:constraint_p_q},
and assume that $\alpha,\beta,\gamma$ satisfy 
\eqref{eq:constraint_alpha}, 
\eqref{eq:constraint_beta},
\eqref{eq:constraint_gamma},
\eqref{eq:constraint_z}.
Then
for 
every examinee type $t \in T$ and 
every notification method $m \in M$, 
$z_{t,m} = 1$ if and only if 
$\phi(t)=m$.
\end{lemma}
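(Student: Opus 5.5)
We fix $t \in T$ and $m \in M$ and prove the two directions separately, in the same way as Lemma~\ref{lemma:gamma}. The main tool is Lemma~\ref{lemma:gamma}, which applies because its hypotheses are contained in ours. It says that $\gamma_{t,s,m}=1$ exactly when $\phi(s)=m$ and we reach $s$ when we apply $\phi$ for $t$.

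First suppose $z_{t,m}=1$. By the first constraint of \eqref{eq:constraint_z}, $\sum_{s \in S}\gamma_{t,s,m} \ge 1$. Since the variables are binary, some sink $s \in S$ has $\gamma_{t,s,m}=1$. Lemma~\ref{lemma:gamma} then gives that $m$ is assigned to $s$ and we reach $s$ when we apply $\phi$ for $t$. By the definition of $\phi(t)$, the procedure stops at the first sink it reaches and outputs that sink's label, so $\phi(t)=\phi(s)=m$.

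Conversely, suppose $\phi(t)=m$. The traversal from $r$ always terminates at some sink $s \in S$, because $D$ is finite and acyclic and every vertex of $U$ has two outgoing arcs, so one of them is always followed. By definition $\phi(t)=\phi(s)$, so $\phi(s)=m$ and we reach $s$. Lemma~\ref{lemma:gamma} gives $\gamma_{t,s,m}=1$, and the second constraint of \eqref{eq:constraint_z} forces $z_{t,m} \ge 1$, hence $z_{t,m}=1$.

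The only step needing care is the identification, in the first direction, of ``we reach $s$'' with ``$\phi(t)=\phi(s)$''. This requires that the traversal for $t$ visits exactly one sink. That holds because the traversal follows a single directed path, and the path ends as soon as it hits a sink since sinks have no outgoing arcs. I will state this explicitly in the proof; everything else is a direct reading of the constraints.
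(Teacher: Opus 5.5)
Your proposal is correct and follows essentially the same route as the paper: both directions go through Lemma~\ref{lemma:gamma}, using the first constraint of \eqref{eq:constraint_z} in one direction and the second in the other. Your remark that the traversal for $t$ reaches exactly one sink is a step the paper leaves implicit.
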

\begin{proof}
Let $t$ and $m$ be 
an examinee type in $T$ and 
a notification method in $M$, 
respectively. 

Assume that $z_{t,m}=1$. 
Then the first constraint
of \eqref{eq:constraint_z} 
implies that
there exists a sink $s \in S$ such that 
$\gamma_{t,s,m} = 1$. 
Thus, Lemma~\ref{lemma:gamma} 
implies that 
$m$ is assigned to $s$
and
we reach $s$ 
when we apply $\phi$ for $t$. 
This implies that 
$\phi(t)=m$.

Conversely, we assume that 
$\phi(t)=m$.
Then 
there exists a sink $s \in S$ such that 
$m$ is assigned to $s$
and
we reach $s$ 
when we apply $\phi$ for $t$. 
This and  
Lemma~\ref{lemma:gamma} imply that 
$\gamma_{t,s,m} =1$. 
Thus, the second constraint 
of \eqref{eq:constraint_z} 
imply that $z_{t,m} = 1$.
\end{proof} 

Next, we consider the objective functions. 
It is not difficult to see that 
if the variables \eqref{eq:p_q}, \eqref{eq:alpha_beta}, \eqref{eq:gamma}, 
\eqref{eq:z} satisfy the constraints \eqref{eq:constraint_p_q}, \eqref{eq:constraint_alpha}, \eqref{eq:constraint_beta}, 
\eqref{eq:constraint_gamma}, \eqref{eq:constraint_z}, 
then  
\begin{equation*} 
{\sf cost}(\phi) = 
\sum_{m \in M}
{\sf cost}(m)  
\sum_{t \in T}
\omega_t \cdot z_{t,m}.
\end{equation*}
Furthermore, we have 
\begin{equation*} 
\begin{split}
{\sf obj}_1(\phi) & = 
\sum_{u \in U}p_{u,\phi_{\rm in}(u)}
+ 
\sum_{s \in S}q_{s,\phi_{\rm in}(s)}
\\
{\sf obj}_2(\phi) & = 
\sum_{t \in T}
\sum_{m \in M}
Y_t(m) \cdot \omega_t \cdot z_{t,m}\\
{\sf obj}_3(\phi) & = 
\sum_{t \in T}
\sum_{m \in M}
Y_t(m) \cdot Z(t) \cdot \omega_t \cdot z_{t,m}.
\end{split}
\end{equation*}

We are now ready to formulate our problem as an integer program. 
The variables are \eqref{eq:p_q}, \eqref{eq:alpha_beta}, \eqref{eq:gamma}, and 
\eqref{eq:z}. 
Then the setting~1 can be formulated as follows. 
\begin{equation*}
\begin{array}{cl}
\mbox{Maximize} & \displaystyle{\sum_{i=1}^3 \dfrac{{\sf obj}_i(\phi)}{\Theta_i}} \vspace{2mm} \\ 
\mbox{subject to} & {\sf cost}(\phi) \le B \vspace{1mm} \\ 
& \mbox{\eqref{eq:constraint_p_q}, \eqref{eq:constraint_alpha}, \eqref{eq:constraint_beta}, 
\eqref{eq:constraint_gamma}, and \eqref{eq:constraint_z}}.
\end{array}
\end{equation*}
Furthermore, the setting~2 can be formulated as follows. 
\begin{equation*}
\begin{array}{cl}
\mbox{Minimize} & {\sf cost}(\phi) \vspace{2mm} \\ 
\mbox{subject to} & {\sf obj}_1(\phi) \ge \Theta_1/2 \\
& {\sf obj}_i(\phi) \ge \Theta_i 
\ \ \mbox{($\forall i \in \{2,3\}$)} \vspace{1mm} \\ 
& \mbox{\eqref{eq:constraint_p_q}, \eqref{eq:constraint_alpha}, \eqref{eq:constraint_beta}, 
\eqref{eq:constraint_gamma}, and \eqref{eq:constraint_z}}. 
\end{array}
\end{equation*}
Finally, the setting~3 can be formulated as follows. 
\begin{equation*}
\begin{array}{cl}
\mbox{Maximize} & {\sf obj}_1(\phi) \vspace{2mm} \\ 
\mbox{subject to} & {\sf cost}(\phi) \le B \vspace{1mm} \\ 
& {\sf obj}_i(\phi) \ge \Theta_i \ \ \mbox{($\forall i \in \{2,3\}$)} \vspace{1mm} \\ 
& \mbox{\eqref{eq:constraint_p_q}, \eqref{eq:constraint_alpha}, \eqref{eq:constraint_beta}, 
\eqref{eq:constraint_gamma}, and \eqref{eq:constraint_z}}.
\end{array}
\end{equation*}

\section{Numerical Examples}
\label{section:numerical_example}

In the following numerical examples, 
we used Gurobi Optimizer (version 12.0.1)~\cite{gurobi} to 
solve the integer programs, and 
the program was implemented in Python 3.13.1 and 
run on a MacBook Pro with macOS Sequoia 15.7.3, 
an Apple M4 Pro, and 48 GB RAM.

\subsection{Instances} 

In our experiments, we define 
$I \coloneqq \{0,1,\dots,48\}$ and
$M \coloneqq \{0,1,2,3\}$. 
The notification methods $0, 1, 2, 3$ 
correspond to 
no suggestion, by mail, by telephone, and 
by mail and telephone, respectively.
Define the cost of each notification method 
by
\begin{equation*}
{\sf cost}(0) \coloneqq 0, \ \ 
{\sf cost}(1) \coloneqq 200, \ \ 
{\sf cost}(2) \coloneqq 500, \ \ 
{\sf cost}(3) \coloneqq 700.  
\end{equation*}
In the following examples, 
for every examinee type $t \in T$, 
we define $Y_t(0) \coloneqq 0$. 
For the health checkup items in $I$ and 
how to generate our data, 
see Appendix~\ref{appendix:example}.

We consider the following three 
instances, which are called Instances~1, 2, and 3. 
The diagrams of the instances are illustrated in Figure~\ref{fig:example}. 
The real lines are the arcs having the label $1$, and 
the dashed lines are the arcs having the label $0$.
Furthermore, the input data for each instance is described in 
Table~\ref{tab1}.
For the initial assignments, see Section~\ref{section:result}.

\begin{figure}[ht]
\begin{minipage}{0.28\hsize}
\begin{center}
\includegraphics[width=3cm]{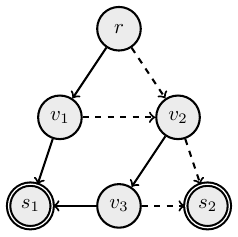}
\par(a)
\end{center}
\end{minipage}
\begin{minipage}{0.35\hsize}
\begin{center}
\includegraphics[width=5cm]{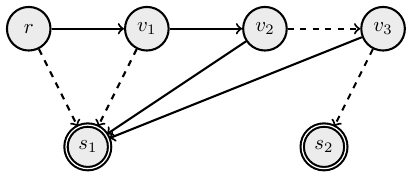}
\par(b)
\end{center}
\end{minipage}
\begin{minipage}{0.35\hsize}
\begin{center}
\includegraphics[width=5cm]{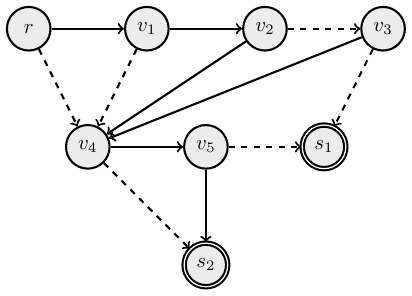}
\par(c)
\end{center}
\end{minipage}
\caption{(a) Instance~1.
(b) Instance~2.
(c) Instance~3.}
\label{fig:example}
\end{figure}

\begin{table}[h]
\caption{The input data for the instances.}
\vspace{1mm}
\label{tab1}
\centering
\begin{tabular}{ccccccc}
\toprule
& $D$ & $|T|$ & $B$ & $\Theta_1$ & $\Theta_2$ & $\Theta_3$ \\
\cmidrule(lr){2-7}
{\bf Instance~1} & Fig.~\ref{fig:example}(a) & 524 & 35000 & 6 & 15 & 9 \\
{\bf Instance~2} & Fig.~\ref{fig:example}(b) & 22450 & 373333 & 6 & 160 & 96 \\
{\bf Instance~3} & Fig.~\ref{fig:example}(c) & 4742 & 483000 & 8 & 207 & 124 \\
\bottomrule
\end{tabular}
\end{table}

For each vertex $u \in U$,
$\mathbb{C}_u$ is defined as follows.
For each vertex $u \in U$, 
we define $\mathbb{D}_u \subseteq 2^I$
as the family of subsets $c \subseteq I$ 
satisfying one of the following 
conditions.
\begin{description}
\item[(C1)]
$c \subseteq \phi_{\rm in}(u)$ and $|\phi_{\rm in}(u) \setminus c| = 1$. 
\item[(C2)]
$\phi_{\rm in}(u) \subseteq c$ and $|c \setminus \phi_{\rm in}(u)| = 1$. 
\item[(C3)]
$|\phi_{\rm in}(u) \setminus c| = 1$ and 
$|c \setminus \phi_{\rm in}(u)| = 1$. 
\end{description}
That is, we try to find an assignment that is 
not drastically different from the initial assignment. 
This constraint reflects 
the thought that, in applications, 
dramatic changes to the rule confuse 
operation. 
Define ${\cal F}^- \subseteq 2^I$ by 
\begin{equation*}
\begin{split}
F^- & \coloneqq 
\{
\{1,2\},
\{3,4\},
\{5,6,7,8,9,10,11\},
\{12,13,14\},
\{15,16,17\},\\
& 
\ \ \ \ \ \ \ \ 
\{18,19,20,21,22,23,24\},
\{25,26,27,28,29,30,31,32,33,34,35\}
\}.
\end{split}
\end{equation*}
Then for each vertex $u \in U$, 
we define $\mathbb{E}_u$ as the set of elements $c \in \mathbb{D}_u$ such that 
$|c \cap F| \le 1$ for every 
element $F \in {\cal F}^-$.
That is, 
the health checkup items are categorized, and 
each vertex can contain at most one item from each 
category.

We define $\mathbb{C}_u$ for each vertex $u \in U$ 
in each instance as follows. 
\begin{itemize}
\item
In Instance~1, we define as follows.
\begin{itemize}
\item
$\mathbb{C}_{r} \coloneqq 
\{c \in \mathbb{E}_{r} \mid c \subseteq \{0\}\}$. 
\item
$\mathbb{C}_{v_1} \coloneqq
\{c \in \mathbb{E}_{v_1} \mid c \subseteq \{1,2,3,4,5,6,7,8,9,10,11,12,13,14,15,16,17\}\}$. 
\item
$\mathbb{C}_{v_2} \coloneqq 
\{c \in \mathbb{E}_{v_2} \mid c \subseteq \{38,39,42,44,46,47,48\}\}$. 
\item
$\mathbb{C}_{v_3} \coloneqq 
\{c \in \mathbb{E}_{v_3} \mid c \subseteq \{36,40,43\}\}$. 
\end{itemize}
\item
In Instance~2, we define as follows.
\begin{itemize}
\item
$\mathbb{C}_{r} \coloneqq 
\{c \in \mathbb{E}_{r} \mid c \subseteq \{0\}\}$. 
\item
$\mathbb{C}_{v_1} \coloneqq 
\{c \in \mathbb{E}_{v_1} \mid c \subseteq \{1,2,3,4,5,6,7,8,9,10,11,12,13,14,15,16,17\}\}$. 
\item
$\mathbb{C}_{v_2} \coloneqq 
\{c \in \mathbb{E}_{v_2} \mid c \subseteq \{18,19,20,21,22,23,24,25,26,27,28,29,30,31,32,33,34,35\}\}$. 
\item
$\mathbb{C}_{v_3} \coloneqq 
\{c \in \mathbb{E}_{v_3} \mid c \subseteq \{36,37,38,39,40,41,42,43,44,45\}\}$. 
\end{itemize}
\item
In Instance~3, we define as follows.
\begin{itemize}
\item
$\mathbb{C}_{r} \coloneqq 
\{c \in \mathbb{E}_{r} \mid c \subseteq \{0\}\}$. 
\item
$\mathbb{C}_{v_1} \coloneqq 
\{c \in \mathbb{E}_{v_1} \mid c \subseteq \{1,2,3,4,5,6,7,8,9,10,11,12,13,14,15,16,17\}\}$. 
\item
$\mathbb{C}_{v_2} \coloneqq 
\{c \in \mathbb{E}_{v_2} \mid c \subseteq \{36,37,38,39,40,41,42,43,44,45\}\}$. 
\item
$\mathbb{C}_{v_3} \coloneqq 
\{c \in \mathbb{E}_{v_3} \mid c \subseteq \{36,37,38,39,40,41,42,43,44,45\}\}$. 
\item
$\mathbb{C}_{v_4} \coloneqq 
\{c \in \mathbb{E}_{v_4} \mid c \subseteq \{38,39,42,44,46,47,48\}\}$. 
\item
$\mathbb{C}_{v_5} \coloneqq 
\{c \in \mathbb{E}_{v_5} \mid c \subseteq \{36,40,43\}\}$. 
\end{itemize}
\end{itemize}
This means that, in each instance, 
each vertex has its own role, and 
can contain only the items related to this role. 

\subsection{Results} 
\label{section:result}

Here we give the results of our experiments. 

The results for Instance~1 are described in 
Tables~\ref{tab_instance_1_1} and \ref{tab_instance_1_2},  
the results for Instance~2 are described in 
Tables~\ref{tab_instance_2_1} and \ref{tab_instance_2_2}, and 
the results for Instance~3 are described in 
Tables~\ref{tab_instance_3_1} and \ref{tab_instance_3_2}. 

\begin{table}[h]
\caption{The result for Instance~1.}
\vspace{1mm}
\label{tab_instance_1_1}
\centering
\begin{tabular}{cccccc}
\toprule
& Time (sec.) & ${\sf cost}(\phi)$ & ${\sf obj}_1(\phi)$ & ${\sf obj}_2(\phi)$ & ${\sf obj}_3(\phi)$ \\
\cmidrule(lr){2-6}
{\bf Input} & $-$ & 15600 & $-$ & 18 & 8 \\
{\bf Setting~1} & 2.57 & 34800 & 4 & 43 & 27 \\
{\bf Setting~2} & 2.44 & 10400 & 3 & 15 & 10 \\
{\bf Setting~3} & 0.98 & 26000 & 5 & 36 & 15 \\
\bottomrule
\end{tabular}
\end{table}

\begin{table}[h]
\caption{The result for Instance~1.}
\vspace{1mm}
\label{tab_instance_1_2}
\centering
\begin{tabular}{ccccccc}
\toprule
& $\phi(r)$ & $\phi(v_1)$ & $\phi(v_2)$ & $\phi(v_3)$ & $\phi(s_1)$ & $\phi(s_2)$ \\ 
\cmidrule(lr){2-7}
{\bf Input} & $\{0\}$ & $\{1,4,8\}$ & $\{44\}$ & $\{40\}$ & $\{1\}$ & $\{0\}$ \\
{\bf Setting~1} & $\{0\}$ & $\{1,4,8,15\}$ & $\{42\}$ & $\{40\}$ & $\{1\}$ & $\{0\}$ \\
{\bf Setting~2} & $\{0\}$ & $\{1,4,17\}$ & $\{39,44\}$ & $\{36\}$ & $\{1\}$ & $\{0\}$ \\
{\bf Setting~3} & $\{0\}$ & $\{1,4,8\}$ & $\{38,44\}$ & $\{40\}$ & $\{1\}$ & $\{0\}$ \\
\bottomrule
\end{tabular}
\end{table}

\begin{table}[h]
\caption{The result for Instance~2.}
\vspace{1mm}
\label{tab_instance_2_1}
\centering
\begin{tabular}{cccccc}
\toprule
& Time (sec.) & ${\sf cost}(\phi)$ & ${\sf obj}_1(\phi)$ & ${\sf obj}_2(\phi)$ & ${\sf obj}_3(\phi)$ \\
\cmidrule(lr){2-6}
{\bf Input} & $-$ & 88000 & $-$ & 135 & 93 \\
{\bf Setting~1} & 819.30 & 370000 & 4 & 576 & 313 \\
{\bf Setting~2} & 1067.69 & 103600 & 4 & 160 & 107 \\
{\bf Setting~3} & 66.56 & 308000 & 5 & 167 & 103 \\
\bottomrule
\end{tabular}
\end{table}

\begin{table}[h]
\caption{The result for Instance~2.}
\vspace{1mm}
\label{tab_instance_2_2}
\centering
\begin{tabular}{ccccccc}
\toprule
& $\phi(r)$ & $\phi(v_1)$ & $\phi(v_2)$ & $\phi(v_3)$ & $\phi(s_1)$ & $\phi(s_2)$ \\ 
\cmidrule(lr){2-7}
{\bf Input} & $\{0\}$ & $\{8\}$ & $\{23\}$ & $\{45\}$ & $\{0\}$ & $\{1\}$ \\
{\bf Setting~1} & $\{0\}$ & $\{8,16\}$ & $\{25\}$ & $\{45\}$ & $\{0\}$ & $\{1\}$ \\
{\bf Setting~2} & $\{0\}$ & $\{8\}$ & $\{19\}$ & $\{36\}$ & $\{0\}$ & $\{1\}$ \\
{\bf Setting~3} & $\{0\}$ & $\{8\}$ & $\{23\}$ & $\{45\}$ & $\{0\}$ & $\{3\}$ \\
\bottomrule
\end{tabular}
\end{table}

\begin{table}[h]
\caption{The result for Instance~3.}
\vspace{1mm}
\label{tab_instance_3_1}
\centering
\begin{tabular}{cccccc}
\toprule
& Time (sec.) & ${\sf cost}(\phi)$ & ${\sf obj}_1(\phi)$ & ${\sf obj}_2(\phi)$ & ${\sf obj}_3(\phi)$ \\
\cmidrule(lr){2-6}
{\bf Input} & $-$ & 144400 & $-$ & 233 & 120 \\
{\bf Setting~1} & 140.36 & 432600 & 3 & 664 & 342 \\
{\bf Setting~2} & 2133.56 & 137200 & 5 & 228 & 124 \\
{\bf Setting~3} & 30.23 & 361000 & 7 & 271 & 125 \\
\bottomrule
\end{tabular}
\end{table}

\begin{table}[h]
\caption{The result for Instance~3.}
\vspace{1mm}
\label{tab_instance_3_2}
\centering
\begin{tabular}{ccccccccc}
\toprule
& $\phi(r)$ & $\phi(v_1)$ & $\phi(v_2)$ & $\phi(v_3)$ & $\phi(v_4)$ & $\phi(v_5)$ & $\phi(s_1)$ & $\phi(s_2)$ \\ 
\cmidrule(lr){2-9}
{\bf Input} & $\{0\}$ & $\{1,8\}$ & $\{44\}$ & $\{45\}$ & $\{39\}$ & $\{36\}$ & $\{1\}$ & $\{0\}$ \\
{\bf Setting~1} & $\{0\}$ & $\{1,5\}$ & $\{\}$ & $\{\}$ & $\{39,48\}$ & $\{\}$ & $\{1\}$ & $\{0\}$ \\
{\bf Setting~2} & $\{0\}$ & $\{8,12\}$ & $\{37\}$ & $\{41,45\}$ & $\{39\}$ & $\{36\}$ & $\{1\}$ & $\{0\}$ \\
{\bf Setting~3} & $\{0\}$ & $\{1,8\}$ & $\{44\}$ & $\{45\}$ & $\{39\}$ & $\{36\}$ & $\{2\}$ & $\{0\}$ \\
\bottomrule
\end{tabular}
\end{table}

Basically, every setting of each instance 
can be solved with reasonable computational time. 
Furthermore, the solutions can improve the key performance 
indicators than the initial decision diagrams.
Our results show that Setting~3 is the easiest one 
among the three settings.  

\section{Concluding Remarks}
\label{section:conclusion} 

In this paper, we propose an integer programming approach to 
optimizing a decision diagram 
used for 
health guidance.  
We formulate 
this problem as an integer program.
Then we evaluate its practical usefulness 
through numerical experiments.

In the model of this paper, we assume that we know  
the effect of health guidance for each examinee type
in advance. 
It would be interesting to consider a model 
where the effect  
of health guidance for each examinee type 
is uncertain. 

\section*{Acknowledgments}

This research was partially conducted by the Fujitsu Small Research 
Lab ``Division of Fujitsu Mathematical Modeling for Decision Making,'' 
a joint research center between 
Fujitsu Limited and Kyushu University.

\bibliographystyle{plain}
\bibliography{decision_diagram_bib}

\clearpage 

\appendix

\section{Details of Numerical Examples}
\label{appendix:example} 

\subsection{Health checkup items}

\begin{itemize}
\item
$\{0\}$ : Health checkup
\item
$\{1,2\}$ : Fasting Blood Glucose 
\begin{itemize}
\item
Item~1: 
Fasting Blood Glucose $\ge 126$ 
\item
Item~2: 
Fasting Blood Glucose $\ge 130$ 
\end{itemize}
\item
$\{3,4\}$ : Casual Blood Glucose
\begin{itemize}
\item
Item~3: 
Casual Blood Glucose $\ge 126$
\item
Item~4: 
Casual Blood Glucose $\ge 200$ 
\end{itemize}
\item
$\{5,6,7,8,9,10,11\}$ : HbA1c
\begin{itemize}
\item
Item~5: 
HbA1c $\ge 5.6$ 
\item
Item~6: 
HbA1c $\ge 6.0$ 
\item
Item~7: 
HbA1c $\ge 6.2$ 
\item
Item~8: 
HbA1c $\ge 6.5$ 
\item
Item~9: 
HbA1c $\ge 7.0$ 
\item
Item~10: 
HbA1c $\ge 8.0$ 
\item
Item~11: 
$6.0 \le$ HbA1c $< 6.5$ 
\end{itemize}
\item
$\{12,13,14\}$ : Diastolic Blood Pressure 
\begin{itemize}
\item
Item~12: 
Diastolic Blood Pressure $\ge 90$ 
\item
Item~13: 
Diastolic Blood Pressure $\ge 100$ 
\item
Item~14: 
Diastolic Blood Pressure $\ge 160$ 
\end{itemize}
\item
$\{15,16,17\}$ : Systolic Blood Pressure 
\begin{itemize}
\item
Item~15: 
Systolic Blood Pressure $\ge 130$ 
\item
Item~16: 
Systolic Blood Pressure $\ge 140$ 
\item
Item~17: 
Systolic Blood Pressure $\ge 160$ 
\end{itemize}
\item
$\{18,19,20,21,22,23,24\}$ : Urine Protein 
\begin{itemize}
\item
Item~18: 
Urine Protein $= 1$ 
\item
Item~19: 
Urine Protein $= 2$ 
\item
Item~20: 
Urine Protein $= 3$ 
\item
Item~21: 
Urine Protein $\ge 1$ 
\item
Item~22: 
Urine Protein $\ge 2$ 
\item
Item~23: 
Urine Protein $\ge 3$ 
\item
Item~24: 
Urine Protein $\ge 4$ 
\end{itemize}
\item
$\{25,26,27,28,29,30,31,32,33,34,35\}$ : eGFR 
\begin{itemize}
\item
Item~25: 
eGFR $< 30$ 
\item
Item~26: 
eGFR $< 45$ 
\item
Item~27: 
eGFR $< 50$ 
\item
Item~28: 
eGFR $< 60$ 
\item
Item~29: 
eGFR $< 90$ 
\item
Item~30: 
eGFR $\ge 30$ 
\item
Item~31: 
$30 \le$ eGFR $< 45$
\item
Item~32: 
$30 \le$ eGFR $< 60$
\item
Item~33: 
$30 \le$ eGFR $< 90$
\item
Item~34: 
$45 \le$ eGFR $< 60$
\item
Item~35: 
$60 \le$ eGFR $< 90$
\end{itemize}
\item
$\{36,37,38,39,40\}$ : Diabetes (Visit history)
\begin{itemize}
\item
Item~36: 
Visit (this year)
\item
Item~37: 
Treatment (ongoing) 
\item
Item~38: 
Visit (previous year) 
\item
Item~39: 
Visit (between year before previous year and the previous year)
\item
Item~40: 
No visit (by two months after health checkup)
\end{itemize}
\item
$\{41,42,43\}$ : Hypertension (Visit history)
\begin{itemize}
\item
Item~41: 
Visit (this year)
\item
Item~42: 
Visit (previous year)
\item
Item~43: 
Visit (by three months after health checkup)
\end{itemize}
\item
$\{44\}$ : Diabetes (Medication)
\item
$\{45\}$ : Visit to medical institution
\item
$\{46,47\}$ : Diabetes (Medical history)
\begin{itemize}
\item
Item~46: 
There is a medical history
\item
Item~47: 
Treatment interruption
\end{itemize}
\item
$\{48\}$ : Hypertension (Treatment interruption)
\end{itemize}

\subsection{Data generation} 

The term ``sd'' means the standard deviation. 
If we write 
$\{0\colon x, 1\colon y\}$ for some item, then 
this means that 
the value of this item is $0$ with probability $x$ and 
$1$ with probability $y$.

\begin{itemize}
\item
Health checkup history: $\{0:0.449, 1:0.551\}$\\
This is created with reference to the following data. 
\begin{itemize}
\item
\url{https://www.kokuho.or.jp/statistics/tokutei/sokuhou/}
\end{itemize}
\item
Fasting Blood Glucose: 
$\min = 20$, $\max = 600$, ${\rm mean} = 97.78$, ${\rm sd} = 21.8$\\
This is created with reference to the following data. 
\begin{itemize}
\item
\url{https://www.kyoukaikenpo.or.jp/g7/cat740/sb7240/h28houkokusho/}
\end{itemize}
\item
Casual Blood Glucose:
$\{120:0.889, 130:0.039, 170:0.052, 210:0.020\}$\\
This is created with reference to the following data. 
\begin{itemize}
\item
\url{https://www.mhlw.go.jp/stf/seisakunitsuite/bunya/0000177221_00014.html}
\end{itemize}
The original data consists of the following four categories 
$x < 126$, $126 \le x < 140$, $140 \le x < 200$, and $200 \le x$. 
In our data, we transform these categories into 
$x = 120$, $x = 130$, $x = 170$, and $x = 210$. 
\item
HbA1c: 
$\min = 3$, $\max = 20$, ${\rm mean} = 5.19$, ${\rm sd} = 0.73$\\
This is created with reference to the following data. 
\begin{itemize}
\item
\url{https://www.kyoukaikenpo.or.jp/g7/cat740/sb7240/h28houkokusho/}
\end{itemize}
\item
Diastolic Blood Pressure:
$\min = 30$, $\max = 150$, ${\rm mean} = 75.45$, ${\rm sd} = 12.15$\\
This is created with reference to the following data. 
\begin{itemize}
\item
\url{https://www.kyoukaikenpo.or.jp/g7/cat740/sb7240/h28houkokusho/}
\end{itemize}
\item
Systolic Blood Pressure: 
$\min = 60$, $\max = 300$, ${\rm mean} = 120.63$, ${\rm sd} = 17.11$\\
This is created with reference to the following data. 
\begin{itemize}
\item
\url{https://www.kyoukaikenpo.or.jp/g7/cat740/sb7240/h28houkokusho/}
\end{itemize}
\item
Urine Protein: 
$\{1:0.853, 2:0.100, 3:0.034, 4:0.010, 5:0.003\}$\\
This is created with reference to the following data. 
\begin{itemize}
\item
\url{https://www.mhlw.go.jp/stf/seisakunitsuite/bunya/0000177221_00010.html}
\end{itemize}
The original data consists of the following five categories 
$-$, $\pm$, $+$, $++$, and $+++$. 
In our data, we transform these categories into 
$1$, $2$, $3$, $4$, and $5$. 
\item
eGFR: $\min = 1$, $\max = 500$, ${\rm mean} = 79.56$, ${\rm sd} = 14.54$\\
This is created with reference to the following data. 
\begin{itemize}
\item
\url{https://www.kyoukaikenpo.or.jp/g7/cat740/sb7240/h28houkokusho/}
\end{itemize}
\item
Diabetes (There is a medical history): $\{0:0.830, 1:0.170\}$\\
This is created with reference to the following data. 
\begin{itemize}
\item
\url{https://www.e-stat.go.jp/dbview?sid=0003224455}
\end{itemize}
\item
Diabetes (Treatment (ongoing)): 
$\{0:0.888, 1:0.112\}$\\
This is created with reference to the following data. 
\begin{itemize}
\item
\url{https://www.e-stat.go.jp/dbview?sid=0003224454}
\item
\url{https://www.e-stat.go.jp/dbview?sid=0003224455}
\end{itemize}
The above probability distribution was calculated as follows. 
The ratio of people who was pointed out as diabetes is $0.17$, and 
the ratio of people who had a history of treatment among people with diabetes is $0.657$. 
Thus, we have $0.17 \times 0.657 = 0.112$.
\item
Diabetes (Visit (this year)): 
$\{0:0.888, 1:0.112\}$\\
This is created with reference to the following data. 
\begin{itemize}
\item
\url{https://www.e-stat.go.jp/dbview?sid=0003224454}
\item
\url{https://www.e-stat.go.jp/dbview?sid=0003224455}
\end{itemize}
The above probability distribution was calculated in the
same way as Diabetes (Treatment (ongoing)).
\item
Diabetes (Visit (previous year)): 
$\{0:0.888, 1:0.112\}$\\
This is created with reference to the following data. 
\begin{itemize}
\item
\url{https://www.e-stat.go.jp/dbview?sid=0003224454}
\item
\url{https://www.e-stat.go.jp/dbview?sid=0003224455}
\end{itemize}
The above probability distribution was calculated in the
same way as Diabetes (Treatment (ongoing)).
\item
Diabetes (Visit (between year before previous year and the previous year)): 
$\{0:0.888, 1:0.112\}$\\
This is created with reference to the following data. 
\begin{itemize}
\item
\url{https://www.e-stat.go.jp/dbview?sid=0003224454}
\item
\url{https://www.e-stat.go.jp/dbview?sid=0003224455}
\end{itemize}
The above probability distribution was calculated in the
same way as Diabetes (Treatment (ongoing)).
\item
Diabetes (No visit (by two months after health checkup)): 
$\{0:0.112, 1:0.888\}$\\
This is created with reference to the following data. 
\begin{itemize}
\item
\url{https://www.e-stat.go.jp/dbview?sid=0003224454}
\item
\url{https://www.e-stat.go.jp/dbview?sid=0003224455}
\end{itemize}
The above probability distribution was calculated in the
same way as Diabetes (Treatment (ongoing)).
\item
Diabetes (Treatment interruption): 
$\{0:0.942, 1:0.058\}$\\
This is created with reference to the following data. 
\begin{itemize}
\item
\url{https://www.e-stat.go.jp/dbview?sid=0003224454}
\item
\url{https://www.e-stat.go.jp/dbview?sid=0003224455}
\end{itemize}
The above probability distribution was calculated as follows.
The ratio of people who was pointed out as Diabetes is $0.17$, and 
the ratio of people who had no history of treatment among people with diabetes is $0.343$. 
Thus, we have $0.17 \times 0.343 = 0.058$.
\item
Diabetes (Medication): 
$\{0:0.919, 1:0.081\}$\\
This is created with reference to the following data. 
\begin{itemize}
\item
\url{https://www.e-stat.go.jp/dbview?sid=0003224185}
\end{itemize}
\item
Hypertension (Visit (this year)): 
$\{0:0.868, 1:0.132\}$\\
This is created with reference to the following data. 
\begin{itemize}
\item
\url{https://www.e-stat.go.jp/dbview?sid=0004002598}
\end{itemize}
The original data consists of the numbers of essential hypertension
for every ten ages. 
We added these numbers from 45 to 74, and divided this by the total number of 
people from 45 to 74.
\item
Hypertension (Visit (previous year)): 
$\{0:0.868, 1:0.132\}$\\
This is created with reference to the following data. 
\begin{itemize}
\item
\url{https://www.e-stat.go.jp/dbview?sid=0004002598}
\end{itemize}
The above probability distribution was calculated in the
same way as Hypertension (Visit (this year)).
\item
Hypertension (Visit (by three months after health checkup)) : 
$\{0:0.868, 1:0.132\}$\\
This is created with reference to the following data. 
\begin{itemize}
\item
\url{https://www.e-stat.go.jp/dbview?sid=0004002598}
\end{itemize}
The above probability distribution was calculated in the
same way as Hypertension (Visit (this year)).
\item
Hypertension (Treatment interruption): 
$\{0:0.836, 1:0.164\}$\\
This is created with reference to the following data. 
\begin{itemize}
\item
\url{https://www.e-stat.go.jp/dbview?sid=0003224458}
\end{itemize}
The original data contain the ratio of people with medicine (0.657) among 
people with hypertension (0.477).
Thus, we calculated the ratio of people without medicine as 
$0.477 \times 0.343 = 0.164$.
\item
Visit to medical institution: 
$\{0:0.706, 1:0.294\}$\\
This is created with reference to the following data. 
\begin{itemize}
\item
\url{https://www.mhlw.go.jp/stf/seisakunitsuite/bunya/0000177221_00014.html}
\end{itemize}
The original data consists of the numbers of outpatients
for every five ages. 
We added these numbers from 45 to 74, and divided this by the total number of 
people from 45 to 74.
\end{itemize}

\end{document}